\documentclass[12pt]{article}
\usepackage[UKenglish]{babel}
\usepackage[T1]{fontenc}
\usepackage{lmodern,amsmath,amsthm,amsfonts,amssymb,graphicx,float,microtype,thmtools,underscore,mathtools,thm-restate}
\usepackage[shortlabels]{enumitem}
\setlist[itemize]{topsep=0ex,itemsep=0ex,parsep=0ex}
\setlist[enumerate]{topsep=0ex,itemsep=0ex,parsep=0ex}
\usepackage[usenames,dvipsnames,svgnames,table]{xcolor}
\usepackage{todonotes}
\usepackage[unicode=true]{hyperref}
\usepackage{breakurl}
\hypersetup{ 
colorlinks,
linkcolor={blue!60!black},
citecolor={black},
urlcolor={blue!60!black},
pdftitle={An Induced A-Path Theorem}}
\usepackage[capitalise, compress, nameinlink, noabbrev]{cleveref}
\crefname{lem}{Lemma}{Lemmas}
\crefname{thm}{Theorem}{Theorems}
\crefname{ques}{Question}{Theorems}
\crefname{cor}{Corollary}{Corollaries}
\crefname{enumi}{Item}{Items}
\crefformat{equation}{(#2#1#3)}
\Crefformat{equation}{Equation #2(#1)#3}
\crefformat{enumi}{#2#1#3}
\Crefformat{enumi}{Item (#2#1#3)}

\newcommand{\defn}[1]{\textcolor{Maroon}{\emph{#1}}}
\usepackage[longnamesfirst,numbers,sort&compress]{natbib}
\makeatletter
\def\NAT@spacechar{~}
\makeatother
\setlength{\bibsep}{0.4ex plus 0.2ex minus 0.2ex}
\usepackage[margin=28mm]{geometry}
\renewcommand{\baselinestretch}{1.1}
\setlength{\footnotesep}{\baselinestretch\footnotesep}
\setlength{\parindent}{0cm}
\setlength{\parskip}{1.2ex}
\allowdisplaybreaks

\DeclarePairedDelimiter{\floor}{\lfloor}{\rfloor}

\renewcommand{\epsilon}{\varepsilon}
\renewcommand{\emptyset}{\varnothing}
\renewcommand{\geq}{\geqslant}
\renewcommand{\leq}{\leqslant}

\DeclareMathOperator{\dist}{dist}

\newcommand{\A}{\mathcal{A}}

\newcommand{\NN}{\mathbb{N}}


\renewcommand{\thefootnote}{\fnsymbol{footnote}}
\theoremstyle{plain}
\newtheorem{thm}{Theorem}
\newtheorem{lem}[thm]{Lemma}

\newtheorem{obs}[thm]{Observation}
\newtheorem{claim}{Claim}
\crefname{obs}{Observation}{Observations}
\newtheorem*{lem*}{Lemma}
\theoremstyle{definition}
\newtheorem{conj}[thm]{Conjecture}
\newtheorem*{conj*}{Conjecture}

\presetkeys%
{todonotes}%
{inline}{}
\date{}

\begin{document}

\title{\bf\fontsize{18pt}{18pt}\selectfont An Induced $A$-Path Theorem}

\author{Robert Hickingbotham\,\footnotemark[1]	\quad \quad \quad Gwena\"el Joret\,\footnotemark[1]	}

\footnotetext[1]{D\'epartement d'Informatique, Universit\'e libre de Bruxelles, Belgium ({\tt robert.hickingbotham@ulb.be}, {\tt gwenael.joret@ulb.be}). R.\ Hickingbotham and G.\ Joret are supported by the Belgian National Fund for Scientific Research (FNRS).}

\maketitle
\begin{abstract}
     Given a graph $G$ and $\A\subseteq V(G)$, a classical theorem of Gallai (1964) states that for every positive integer $k$, the graph $G$ contains $k$ pairwise vertex-disjoint $\A$-paths, or a set $Z\subseteq V(G)$ of size at most $2(k-1)$ such that $G-Z$ contains no $\A$-paths. We generalise Gallai's theorem to the induced setting: We prove that $G$ contains $k$ pairwise anti-complete $\A$-paths, or a set $Z$ of size at most $78(k-1)$ such that, after removing the closed neighbourhood of $Z$, the resulting graph has no $\A$-path. Here, two paths are anti-complete if they are vertex disjoint and there is no edge in $G$ having one endpoint in each of them.  
    
    We further show that the bound $78(k-1)$ on the size of $Z$ can be reduced to $4(k-1)$ if one removes the balls of radius $4$ around the vertices of $Z$ (instead of radius $1$), which is within a factor $2$ of optimal. We also establish analogous results for long induced $\A$-paths.
    
\end{abstract}

\renewcommand{\thefootnote}{\arabic{footnote}}

\section{Introduction}

Let $G$ be a graph and $\A\subseteq V(G)$. An \defn{$\A$-path} in $G$ is a path with at least one edge whose endpoints belong to $\A$. The classic $\A$-path theorem of \citet{Gallai1964} states that for every $k\in \NN$, $G$ contains $k$ pairwise vertex-disjoint $\A$-paths, or there exists a set $Z\subseteq V(G)$ with size at most $2k-2$ such that $G-Z$ contains no $\A$-paths. This min-max theorem was a key ingredient in the original proof of the celebrated Erd{\H{o}}s-P{\'o}sa theorem~\cite{ErdosPosa1965}: for every $k\in \NN$, every graph $G$ contains $k$ vertex disjoint cycles or a set $X$ with $|X|\in \mathcal{O}(k\log(k))$ such that $X$ intersect all cycles in $G$. This result initiated a long line of research into other so-called Erd{\H{o}}s-P{\'o}sa type theorems; see \cite{RT17} for a survey.

In recent years, attention has turned to coarse Erd{\H{o}}s-P{\'o}sa type theorems, where `disjoint' is replaced with `far apart' \cite{AGHK2025EP,DJMM2024EP}. This direction is motivated by the emerging field of coarse graph theory, which seeks to understand the large-scale geometry of graphs. One of the central questions in this area is the Coarse Menger Conjecture, independently posed by \citet{Albrechtsen2024Menger} and \citet{georgakopoulos2023graph}. For a graph $G$, $X\subseteq V(G)$, and $r\in \NN$, let $N_G[X,r]:=\{v\in V(G)\colon \dist_G(X,v)\leq r\}$ and $N_G[X]:=N_G[X,1]$.

\begin{conj}[\cite{Albrechtsen2024Menger,georgakopoulos2023graph}]\label{CoarseMenger}
    For all $k,d\in \NN$, there exists $c\in \NN$ such that every graph $G$ and $X,Y\subseteq V(G)$ contains one of the following:
   \begin{enumerate}
        \item[(i)] $k$ $(X,Y)$-paths $P_1,\dots,P_k$ such that $\dist_G(P_i,P_j)\geq d$ for all distinct $i,j\in [k]$; or
        \item[(ii)] a set $Z\subseteq V(G)$ with $|Z|\leq k-1$ such that $G-N_G[Z,cd]$ has no $(X,Y)$-path.
    \end{enumerate} 
\end{conj}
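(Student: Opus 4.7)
The plan is to prove the conjecture by induction on $k$, combining classical Menger's theorem with a coarse-compactification argument for the cut. The base case $k=1$ is immediate: either some $(X,Y)$-path exists (take $P_1$), or $X$ and $Y$ are disconnected and $Z=\varnothing$ works. For $k\geq 2$ and $d\in\NN$, fix a threshold $N=N(k,d)$ to be chosen. Apply the classical vertex-Menger theorem. If $G$ has $N$ internally vertex-disjoint $(X,Y)$-paths, I would aim to extract $k$ from this packing that are pairwise at distance at least $d$. Otherwise, $G$ admits a vertex cut $Z_0$ with $|Z_0|\leq N-1$ separating $X$ from $Y$, and the task reduces to replacing $Z_0$ by a set of size at most $k-1$ whose $cd$-neighbourhood still separates $X$ from $Y$.

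For the many-paths case, I would BFS-layer $G$ from $X$; any internally vertex-disjoint $(X,Y)$-path meets each layer at most once. Iteratively pick a path $P_i$, apply the inductive hypothesis with parameter $k-1$ to the subgraph obtained after deleting $N_G[P_i,d]$, and either harvest a smaller hitting set --- which together with at most one extra vertex yields the required $Z$ of size at most $k-1$ --- or continue. A counting argument should show that choosing $N(k,d)$ sufficiently large (polynomial in the constants from induction) ensures that $k$ pairwise far-apart paths are obtained before the process terminates.

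The few-paths case is where the real difficulty lies. The natural approach is a coarse uncrossing lemma: among all $(X,Y)$-separators of size at most $k-1$ in $G$, choose one minimising an appropriate potential (for example, total distance from $X$, or the diameter of the cut in $G$), and argue that every remaining $(X,Y)$-path must pass within distance $cd$ of some vertex of this optimal cut. The hard part --- and the reason the statement remains conjectural --- is that classical submodular uncrossing provides no control on coarse distances: the $k-1$ cut vertices can be spread arbitrarily far apart in $G$, and alternative $(X,Y)$-paths can route between them without ever entering any ball $N_G[v,cd]$. Closing this gap seems to require either a genuine coarse analogue of cut-submodularity adapted to ball-thickenings, or a structural dichotomy showing that a spread-out minimum cut already forces many pairwise far-apart $(X,Y)$-paths, looping back into the first case. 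The present paper achieves a hitting set of size $78(k-1)$, which is $O(k)$ but not the Menger-tight $k-1$, precisely because its methods sidestep rather than resolve this compactification question.
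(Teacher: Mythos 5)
This statement is a \emph{conjecture} quoted from the literature; the paper contains no proof of it, and in fact explicitly records that it is \emph{false}: Nguyen, Scott and Seymour constructed counterexamples for every $d\geq 3$, and later showed it remains false even if the bound $k-1$ on $|Z|$ is relaxed to any function of $k$. So no proof strategy can succeed for $d\geq 3$, and your proposal cannot be completed. Only the $d\leq 2$ cases survive: $d=1$ is classical Menger, and $d=2$ is the (still open) Induced Menger Conjecture stated later in the paper, for which Theorem~\ref{MainApath} is offered as evidence, not as a proof.

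Beyond this overarching issue, the two halves of your sketch each contain a genuine gap. In the many-paths case, no threshold $N(k,d)$ works: the counterexample constructions exhibit graphs with arbitrarily many internally vertex-disjoint $(X,Y)$-paths in which no two $(X,Y)$-paths are at distance $\geq d$ and no bounded number of balls of radius $cd$ separates $X$ from $Y$. Internally disjoint paths can all be threaded through a common ``cloud'' of vertices that keeps every pair within distance $2$, so BFS-layering plus induction cannot extract far-apart paths. In the few-paths case you candidly identify the missing coarse uncrossing argument yourself; that gap is not merely hard but provably unfillable for $d\geq3$. Finally, your last sentence misattributes the paper's $78(k-1)$ bound to this conjecture: that bound belongs to the $\A$-path (Gallai-type) Theorem~\ref{MainApath}, which is a different statement --- there the number of balls is \emph{allowed} to grow linearly in $k$, whereas the Coarse Menger Conjecture demands exactly $k-1$, and it is precisely that demand (together with $d\geq3$) that the counterexamples refute.
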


Unfortunately, \citet{NSS2024counterexample} showed that \cref{CoarseMenger} is false for all $d\geq 3$. Furthermore, the same set of authors more recently showed that it remains false even when the number of balls is allowed to depend on a function of $k$ \cite{NSS2025AsymptoticIV}. See \cite{Albrechtsen2024Menger,georgakopoulos2023graph,hendrey2023induced,gartland2023induced,NSS2025CoarseMengerPW,NSS2025DistantPaths} for positive resolution of special cases of \cref{CoarseMenger}.

In many settings, when a Menger-type theorem fails, the corresponding $\A$-path version holds. This phenomenon has been particularly fruitful in the study of group-labelled graphs \cite{CGGGLS2006Apaths,BHJ2018Frames,Bruhn2022Packing,Thomas2023Packing, Wollan2010Packing}. Motivated by this, Geelen proposed a Coarse Gallai Conjecture.\footnote{Posed by Geelen at the Barbados Graph Theory Workshop in March 2024 held at the Bellairs Research Institute of McGill University in Holetown.}

\begin{conj}[Geelen 2024]\label{GeelenConj}
    There exist functions $f,g$ such that, for all $k,d\in \NN$, every graph $G$ and vertex set $\A\subseteq V(G)$ contains one of the following:
     \begin{enumerate}
        \item[(i)] $k$ $\A$-paths $P_1,\dots,P_k$ such that $\dist_G(P_i,P_j)\geq d$ for all distinct $i,j\in [k]$; or
        \item[(ii)] a set $Z\subseteq V(G)$ with $|Z|\leq f(k)$ such that $G-N_G[Z,g(d)]$ has no $\A$-path.
    \end{enumerate} 
\end{conj}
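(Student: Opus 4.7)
The conjecture strengthens this paper's main theorem (which handles $d=2$, since anti-complete paths are precisely those at distance at least~$2$) to arbitrary distance thresholds, and to my knowledge remains open in general. The natural plan is to bootstrap from the base case $d=2$ by induction on $d$: given the statement for a fixed $d$, deduce it for $2d$ (or $d+c$).

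For the inductive step, suppose the conjecture holds for distance $d$ with function $f_d$. To establish it for $2d$, first apply the $d$-version with an inflated parameter $K = K(k,d)$. If this yields the small hitting set, we are done. Otherwise we obtain $K$ pairwise $d$-far $\A$-paths $P_1, \dots, P_K$. Form the auxiliary graph $H$ on $[K]$ with an edge $ij$ whenever $\dist_G(P_i, P_j) < 2d$; an independent set of size $k$ in $H$ yields $k$ pairwise $(2d)$-far $\A$-paths, giving outcome~(i). Otherwise $\alpha(H) < k$, and the clustered structure of the $P_i$ must be converted into the small set $Z$.

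The main obstacle is precisely this clustering step. The Nguyen--Scott--Seymour counterexamples to \cref{CoarseMenger} show that for general $(X,Y)$-paths such clustering can fail, so any proof must genuinely exploit $\A$-path-specific structure (as Gallai's original theorem does). A promising avenue is to choose each $P_i$ to be a \emph{shortest} $\A$-path: short connections between two shortest $\A$-paths are heavily constrained, so one may hope to cover all $\A$-paths that come $(2d)$-close to some $P_i$ by a bounded number of balls of radius $O(d)$ centred on representative vertices of $P_i$ (e.g.\ its two endpoints together with a constant number of interior points, chosen so that the resulting balls dominate $P_i$ at scale~$d$). The paper's $d=2$ theorem could then be invoked inside each such ball to handle the residual short-range structure.

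Any such scheme will yield $f(k)$ and $g(d)$ that grow rapidly --- likely exponentially in $k$ and iteratively in $d$, since the induction on $d$ compounds the constant $78$ from the main theorem at every doubling step. Obtaining polynomial (let alone linear) dependence in either parameter appears to require an argument that bypasses the induction on $d$ altogether, perhaps a direct simultaneous construction of the hitting set using a single application of the $d=2$ tools at the correct scale.
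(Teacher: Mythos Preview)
The statement is a \emph{conjecture}; the paper does not prove it and offers no proof to compare against. What the paper does establish is (a) the case $d=2$ (\cref{MainApath}), and (b) the Observation immediately following the conjecture, which reduces the full conjecture to the single case $d=3$ via a power-graph argument: form $H$ from $G$ by adding edges between every pair of vertices at distance at most $d$, apply the $d=3$ statement in $H$, and pull the outcome back to $G$. This yields $g(d)=d\cdot g(3)$ and $f$ independent of $d$.

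Your proposal correctly flags the conjecture as open, but your suggested route---induct from $d$ to $2d$ by overproducing $d$-far $\A$-paths, building a conflict graph, and analysing its clustering---is strictly harder than what the paper already achieves. The power-graph reduction bypasses any induction on $d$ and isolates $d=3$ as the sole missing case, with no compounding of constants. Your scheme, by contrast, must solve the clustering obstacle (the step where $\alpha(H)<k$ must be converted into a small $Z$) at every doubling; this is precisely the step that the Nguyen--Scott--Seymour constructions make delicate, and you give no mechanism for it beyond the hope that shortest $\A$-paths behave well. Moreover, since your base case is $d=2$, your very first inductive step would already have to produce the $d=3$ (or $d=4$) statement---which is exactly the open problem the paper's reduction singles out. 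So the plan does not constitute progress toward the conjecture beyond what the paper already records.
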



This paper resolves the $d=2$ case of Geelen's conjecture. For a graph $G$, two sets $X,Y\subseteq V(G)$ are \defn{anti-complete} if $X\cap Y=\emptyset$ and there is no edge in $G$ with one end in $X$, the other in $Y$.

\begin{thm}\label{MainApath}
    For every $k\in \NN$, every graph $G$ and $\A\subseteq V(G)$, $G$ contains $k$ pairwise anti-complete $\A$-paths, or there exists a set $Z\subseteq V(G)$ with $|Z|\leq 78(k-1)$ such that $G-N_G[Z]$ has no $\A$-paths.
\end{thm}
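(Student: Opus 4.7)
The natural starting point is to combine Gallai's classical $\A$-path theorem with a structural analysis of the resulting packing. Applying Gallai's theorem with the parameter $K := 39(k-1)+1$ yields either a set $Z_0 \subseteq V(G)$ with $|Z_0| \leq 2(K-1) = 78(k-1)$ such that $G - Z_0$ contains no $\A$-path, or a family $\PP = \{P_1,\ldots,P_K\}$ of $K$ pairwise vertex-disjoint $\A$-paths. In the first case the conclusion is immediate: since $Z_0 \subseteq N_G[Z_0]$, the graph $G - N_G[Z_0]$ is a subgraph of $G - Z_0$ and hence has no $\A$-path either. All subsequent work is devoted to the second case.

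For the second case, the plan is to consider the \emph{interaction graph} $H$ on vertex set $\PP$, where two paths are adjacent in $H$ whenever some edge of $G$ joins a vertex of one to a vertex of the other. An independent set of size $k$ in $H$ is exactly a pairwise anti-complete subfamily of $\A$-paths, so we may assume $\alpha(H) < k$. Fix a maximum independent set $\mathcal{Q} \subseteq \PP$, of size at most $k-1$; by its maximality, every path in $\PP \setminus \mathcal{Q}$ is joined by some edge of $G$ to at least one $Q \in \mathcal{Q}$. The next step is to show that for each $Q \in \mathcal{Q}$ there exists a set $S_Q \subseteq V(Q)$ of at most $39$ vertices whose closed neighbourhood in $G$ meets every $\A$-path of $G$ that is not anti-complete to $Q$. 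Setting $Z := \bigcup_{Q \in \mathcal{Q}} S_Q$ would then yield $|Z| \leq 39(k-1) \leq 78(k-1)$ with $G - N_G[Z]$ containing no $\A$-path, as required.

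The main obstacle will be establishing the existence of these small sets $S_Q$ and, crucially, showing that their combined closed neighbourhood hits every $\A$-path of $G$---not merely those belonging to $\PP$. I would attempt this via a rerouting argument: if some $\A$-path $R$ were to survive in $G - N_G[Z]$, then the maximality of $\PP$ together with the independence of $\mathcal{Q}$ in $H$ should force $R$, possibly after concatenation with subpaths of members of $\mathcal{Q}$, to produce either a $(K{+}1)$-th vertex-disjoint $\A$-path (contradicting the maximality of $\PP$) or an extra $\A$-path anti-complete to all of $\mathcal{Q}$ (contradicting the maximality of $\mathcal{Q}$). Calibrating the counts so that exactly $39$ vertices per path in $\mathcal{Q}$ suffice---yielding the final bound $78(k-1)$---will be the most delicate part of the argument, and will likely require a careful pigeonhole argument on the endpoints and internal \emph{attachment vertices} along each $Q$, together with an auxiliary application of Gallai's theorem inside a suitably contracted or deleted subgraph associated to $Q$. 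A reasonable fall-back strategy, if the constant $39$ per path proves elusive directly, is to first establish the weaker quantitative version and then sharpen the constants by reorganising the interaction-graph analysis in a bootstrapping second pass.
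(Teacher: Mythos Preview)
Your proposal takes a route that is fundamentally different from the paper's, and the central step does not go through. The paper never invokes Gallai's theorem; instead it proves the more general \cref{LongInducedAPaths} by induction on $k$ using Simonovits's \emph{frame technique}: one grows an induced subcubic tree $T$ (the $\A$-frame) whose leaves lie in $\A$, attaching at each step a geodesic from an unused $\A$-vertex to the current frame while protecting bounded-radius balls around the leaves and degree-$3$ vertices. If the frame acquires $2k$ leaves, one extracts $k$ anti-complete $\A$-paths via \cref{LeafPaths,AnticompleteHubPaths}; otherwise the frame has at most $2k-1$ leaves and at most $2k-3$ branch vertices, and the set $Y$ of frame vertices within bounded distance of these $O(k)$ special points separates the frame from the rest of $\A$, allowing induction. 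The key insight is that only the leaves and branch vertices need protection; the long degree-$2$ stretches of the frame are harmless.

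Your plan instead packs $K=39(k-1)+1$ disjoint $\A$-paths, takes a maximum independent set $\mathcal{Q}$ in their interaction graph, and asserts that each $Q\in\mathcal{Q}$ admits $S_Q\subseteq V(Q)$ with $|S_Q|\leq 39$ whose closed neighbourhood meets every $\A$-path not anti-complete to $Q$. This claim is false as stated: the paths in $\mathcal{Q}$ may be arbitrarily long, and a radius-$1$ ball centred on $Q$ covers only three consecutive vertices of $Q$, so $39$ balls cover at most $117$ vertices. An $\A$-path $R$ attaching to $Q$ outside that window is untouched by $N_G[S_Q]$. Your rerouting sketch does not rescue this: such an $R$ need not be anti-complete to $\mathcal{Q}$ (it attaches to $Q$), so it gives no extra anti-complete path; and $\mathcal{Q}$ is maximum only among paths of $\PP$, so an external $R$ anti-complete to $\mathcal{Q}$ would not contradict that maximality anyway. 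Nor does $R$ obviously yield a $(K{+}1)$-th disjoint path, since it may share vertices with members of $\PP\setminus\mathcal{Q}$. The missing idea is precisely to replace the uncontrolled family $\mathcal{Q}$ of long paths by a single tree whose combinatorial complexity---not its size---is bounded in $k$.
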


\Cref{MainApath} marks a positive development for coarse graph theory, offsetting the recent trend of counterexamples in the area \cite{NSS2024counterexample, NSS2025AsymptoticIV, DHIM2024fat, AD2025counterexample}. The use of radius-$1$ balls is best possible: the graph $K_n$ with $\A=V(K_n)$ contains neither $2$ pairwise anti-complete $\A$-path nor a set $Z\subseteq V(K_n)$ with $|Z|\leq n-2$ such that $K_n-Z$ has no $\A$-path. 

In addition to \cref{MainApath}, we prove the following induced $\A$-path theorem, which reduces the number of balls to nearly optimal at the expense of larger ball radii.

\begin{thm}\label{MainApathLargeBalls}
    For every $k\in \NN$, every graph $G$ and $\A\subseteq V(G)$, $G$ contains $k$ pairwise anti-complete $\A$-paths, or there exists a set $Z\subseteq V(G)$ with $|Z|\leq 4(k-1)$ such that $G-N_G[Z,4]$ has no $\A$-paths.
\end{thm}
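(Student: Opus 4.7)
The plan is to prove Theorem~\ref{MainApathLargeBalls} by induction on $k$, with trivial base case $k=1$. The key observation driving the induction is that any $\A$-path $P$ of length at most $27$ has $|V(P)|\leq 28$, so a set $Z_P\subseteq V(P)$ of $4$ evenly-spaced vertices satisfies $V(P)\subseteq N_G[Z_P,3]$ and hence $N_G[V(P)]\subseteq N_G[Z_P,4]$. This matches exactly the gap of $4$ vertices between the budgets $4(k-2)$ and $4(k-1)$ needed to carry the induction when we peel off $P$.

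For the main case, suppose $G$ contains an $\A$-path $P$ of length at most $27$, and apply the inductive hypothesis to $G':=G-N_G[V(P)]$ with parameter $k-1$. Either $G'$ contains $k-1$ pairwise anti-complete $\A$-paths, which together with $P$ yield $k$ pairwise anti-complete $\A$-paths in $G$ (each new path lies in $V(G)\setminus N_G[V(P)]$, hence anti-complete to $P$), or there exists $Z'\subseteq V(G')$ with $|Z'|\leq 4(k-2)$ such that $G'-N_{G'}[Z',4]$ has no $\A$-path. In the latter scenario, the set $Z:=Z'\cup Z_P$ satisfies $|Z|\leq 4(k-1)$; a routine verification shows that $G-N_G[Z,4]$ has no $\A$-path, since any putative such path must avoid $N_G[V(P)]\subseteq N_G[Z_P,4]$ (so it lies in $G'$) and also $N_G[Z',4]\supseteq N_{G'}[Z',4]$ (contradicting the choice of $Z'$).

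The remaining case is when every $\A$-path in $G$ has length at least $28$. Then distinct $\A$-vertices lie at pairwise $G$-distance at least $28$, and so the $G$-balls of radius $4$ around distinct $\A$-vertices are pairwise disjoint. If $|\A|\leq 4(k-1)$, then setting $Z:=\A$ already works, as then $\A\subseteq N_G[Z,4]$ and $G-N_G[Z,4]$ contains no vertex of $\A$ at all. Otherwise $|\A|>4(k-1)$, and the plan is to greedily extract a shortest $\A$-path $Q_i$, delete $N_G[V(Q_i)]$, and iterate; the triangle inequality combined with the pairwise distance bound forces $N_G[V(Q_i)]\cap\A$ to consist of exactly the two endpoints of $Q_i$, so at least $2k-1$ $\A$-vertices remain after $k-1$ iterations.

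I expect this last subcase, with all $\A$-paths long and $|\A|>4(k-1)$, to be the main obstacle: the greedy procedure either produces $k$ pairwise anti-complete $\A$-paths directly, or halts with a residual graph from which a small ``hub'' separator $Z$ of size at most $4(k-1)$ must be extracted. Resolving this dichotomy will likely combine the spread-out structure of $\A$ (each radius-$4$ ball containing at most one $\A$-vertex) with a Menger-style connectivity analysis on the subgraph where the $\A$-paths concentrate.
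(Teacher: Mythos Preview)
Your reduction when a short $\A$-path exists is correct, as is the disposal of the $|\A|\leq 4(k-1)$ subcase. However, the remaining subcase---all $\A$-paths of length at least $28$ and $|\A|>4(k-1)$---is not merely ``the main obstacle'': it is essentially the entire theorem, and your proposal leaves it unresolved. Your greedy extraction can certainly stall after $j<k$ steps, with $G-N_G\bigl[\bigcup_{i\leq j} V(Q_i)\bigr]$ containing no $\A$-path; at that moment you must produce $Z$ with $|Z|\leq 4(k-1)$ whose radius-$4$ balls kill every $\A$-path in $G$, yet the $Q_i$ may be arbitrarily long and nothing in the greedy history indicates where such balls should go. The suggestion of a ``Menger-style connectivity analysis'' is not promising: the counterexamples to the Coarse Menger Conjecture show precisely that this kind of reasoning fails to produce small ball-separators.

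The paper's proof uses a different mechanism. Rather than collecting disjoint paths, it grows a single \emph{$\A$-frame}: an induced subgraph $F$ spanned by a subcubic tree $T$ whose leaves are exactly $\A_F:=\A\cap V(F)$, built by repeatedly attaching a shortest path from an unused $\A$-vertex to $F$ while maintaining that the degree-$3$ vertices $X$ of $T$ (the \emph{hubs}) stay pairwise at $F$-distance at least $3$ and that every non-tree edge of $F$ lies within $T$-distance $2$ of some hub. If the frame reaches $p\geq 2k$ leaves, a hub-tree lemma shows that $\lfloor p/2\rfloor$ vertex-disjoint leaf-to-leaf paths in $T$ are automatically anti-complete in $F$, giving the $k$ paths. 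Otherwise $p\leq 2k-1$, and now the set $\A_F\cup X$ has size $p+(p-2)\leq 4(k-1)$. Maximality of the frame forces every $(\overline{\A},F)$-path in $G$ (with $\overline{\A}:=\A\setminus\A_F$) to pass through $\widetilde{Y}:=N_G[Y]\setminus V(F)$, where $Y$ is the set of frame vertices within $F$-distance $3$ of $\A_F\cup X$; since $\widetilde{Y}\subseteq N_G[\A_F\cup X,4]$, one recurses on the components of $G-\widetilde{Y}$ meeting $\overline{\A}$ with parameter $k-\lfloor p/2\rfloor$ and adds $\A_F\cup X$ to the returned separator. The tree structure is exactly what allows a bounded number of hubs to control all attachments to the frame; this is the idea your proposal is missing.
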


The number of balls in \cref{MainApathLargeBalls} is at most a factor of two from optimal. To see this, let $r\in \NN$ and $G$ be the graph obtained from the complete graph on $2k-1$ vertices by replacing each edge with a path of length $3r$. Let $\A$ be the set of high-degree vertices of $G$. Then $G$ neither contains $k$ pairwise anti-complete $\A$-paths, nor a set $Z\subseteq V(G)$ of size at most $2k-3$ such that $G-N_G[Z,r]$ has no $\A$-path.

We further extend \cref{MainApath,MainApathLargeBalls} to long induced $\A$-paths. For a graph $G$, an \defn{induced path} is a path that is an induced subgraph of $G$. The \defn{length} of a path is the number of edges it contains.

\begin{restatable}{thm}{LongInducedAPaths}\label{LongInducedAPaths}
		 For all $k,\ell\in \NN$, every graph $G$ and $\A\subseteq V(G)$ contains one of the following:
    \begin{enumerate}
        \item[(i)] $k$ pairwise anti-complete induced $\A$-paths of length at least $\ell$; or
        \item[(ii)] sets $Z_1,Z_2\subseteq V(G)$ with $|Z_1|\leq (12\max\{\ell,3\}+42)(k-1)$ and $|Z_2|\leq 4(k-1)$ such that neither $G-N_G[Z_1]$ nor $G-N_G[Z_2,\max\{\ell+1,4\}]$ contains an induced $\A$-path of length at least $\ell$.
    \end{enumerate} 
\end{restatable}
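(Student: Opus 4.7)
My plan is to prove \cref{LongInducedAPaths} by induction on $k$, closely paralleling the inductive frameworks that presumably underlie \cref{MainApath,MainApathLargeBalls}. For $k=1$, either $G$ contains an induced $\A$-path of length at least $\ell$ (case (i)), or it does not, in which case $Z_1 = Z_2 = \emptyset$ trivially witnesses case (ii), so I will focus on the inductive step.

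For the inductive step, I would first check whether $G$ contains any induced $\A$-path of length at least $\ell$. If not, set $Z_1=Z_2=\emptyset$. Otherwise, pick such a path $P$, chosen shortest (or otherwise ``locally minimal'') so that structural control over $V(P)$ is retained, and recurse on $G' := G - N_G[V(P), 1]$ with $\A' := \A\cap V(G')$ and parameter $k-1$. Removing the closed one-neighborhood of $V(P)$ guarantees that $P$ is anti-complete to every induced $\A'$-path of $G'$. If the recursion returns $k-1$ pairwise anti-complete induced $\A'$-paths of length $\geq \ell$, then together with $P$ we obtain outcome (i).

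Otherwise, the recursion returns hitting sets $Z_1', Z_2'$ for $G'$ of sizes at most $(12\max\{\ell,3\}+42)(k-2)$ and $4(k-2)$. The crux is then to extend these to hitting sets $Z_1, Z_2$ for $G$ by adding at most $12\max\{\ell,3\}+42$ new vertices to $Z_1'$ and at most $4$ new vertices to $Z_2'$, in such a way that every induced $\A$-path of length $\geq \ell$ in $G$ that uses a vertex of $N_G[V(P),1]$ (resp.\ $N_G[V(P),\max\{\ell+1,4\}]$) is hit by the appropriate closed neighborhood of the extended set. For $Z_2$, adding only $4$ vertices is plausible because the large radius $\max\{\ell+1,4\}$ makes each added vertex exceptionally powerful, mirroring \cref{MainApathLargeBalls}. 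For $Z_1$, the term $12\max\{\ell,3\}$ seems to reflect the cost (per unit length of $P$, with a floor of $3$) of covering $V(P)$ together with its one-neighborhood by radius-$1$ balls, while the additive constant $42$ parallels the overhead from \cref{MainApath}.

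The main obstacle I anticipate is this extension step: one must characterise the long induced $\A$-paths of $G$ that pass through $N_G[V(P), r]$ for the relevant $r\in \{1,\max\{\ell+1,4\}\}$, and pin down a small absorbing set whose radius-$1$ (respectively radius-$\max\{\ell+1,4\}$) closed neighborhood catches them all. I do not expect a clean black-box reduction to \cref{MainApath,MainApathLargeBalls} applied to $(G,\A,k)$ to work, because anti-complete $\A$-paths extracted from those theorems can have arbitrarily short induced sub-$\A$-paths; instead, I expect the proof to re-engineer the techniques behind the base theorems so that the length-$\ell$ inducedness constraint is preserved at every step of the recursion, and to leverage the minimality of $P$ to bound the local ``cost'' per inductive step by $12\max\{\ell,3\}+42$ and $4$ respectively.
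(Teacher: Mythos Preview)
Your inductive scheme is the right scaffolding, and in fact the paper uses exactly your ``remove $N_G[V(P)]$ and recurse'' step---but only as a preliminary reduction, and only when a \emph{short} witness exists, i.e.\ when some induced $\A$-path of length between $\ell$ and $2\ell-1$ is available. In that case $|V(P)|\le 2\ell$, so throwing all of $V(P)$ into $Z_1$ and the two endpoints into $Z_2$ stays within budget. The gap in your plan is that a shortest induced $\A$-path of length at least $\ell$ need not have length $O(\ell)$: it can be arbitrarily long. When that happens, any long induced $\A$-path of $G$ that enters $N_G[V(P)]$ somewhere in the middle of $P$ must still be hit, and for the radius-$1$ set $Z_1$ this forces you to place centres all along $P$; you cannot cover $N_G[V(P)]$ with $12\max\{\ell,3\}+42$ radius-$1$ balls. (The same objection applies to $Z_2$: four balls of radius $\max\{\ell+1,4\}$ around carefully chosen points of $P$ do not cover a path of length, say, $100\ell$.) Your reading of the constant $12\max\{\ell,3\}$ as a ``per unit length of $P$'' cost is the symptom: the budget is \emph{per inductive step}, not per vertex of $P$.

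The paper's actual argument earns this budget by replacing the single path $P$ with an \emph{$\A$-frame}: an induced subcubic tree $T\subseteq G$ whose leaves are the $\A$-vertices it meets, grown greedily by attaching geodesics from fresh $\A$-vertices to the current frame while protecting small neighbourhoods of the leaves and branch (degree-$3$) vertices. The point is that, no matter how large the frame becomes, it can be separated from the rest of the graph by deleting only the vertices within distance $\widehat\ell$ of the leaves and the branch vertices; there are at most $2p-2$ such centres when the frame has $p$ leaves, and the total mass of these protected zones is $O(\widehat\ell\,p)$. A hub-tree lemma (\cref{AnticompleteHubPaths}) then extracts $\lfloor p/2\rfloor$ pairwise anti-complete long induced $\A$-paths from the frame. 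So either $p\ge 2k$ and you are done, or $p<2k$ and you recurse on the part of $G$ cut off from the frame, paying $O(\widehat\ell)$ per leaf for $Z_1$ and $2$ per leaf for $Z_2$. This frame machinery is the missing idea; without it the extension step you flag as ``the main obstacle'' is not just hard but impossible in general.
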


Setting $\ell=1$ recovers \cref{MainApath,MainApathLargeBalls}.

Beyond coarse graph theory, our motivation for studying induced $\A$-paths stems from the study of induced minors. A graph $H$ is an \defn{induced minor} of a graph $G$ if it can be obtained from an induced subgraph of $G$ by contracting edges. In recent years, considerable progress has been made in understanding the global structure of $H$-induced-minor-free graphs when $H$ is planar, yet comparatively little is known when $H$ is non-planar. We hope that \cref{MainApath} may prove useful in establishing an `Induced Minor Structure Theorem', one that is akin to the celebrated Graph Minor Structure Theorem~\cite{robertson2003graph}. See \cite{Hickingbotham2025InducedMinors} for an application of $\A$-paths to induced minors.

\cref{MainApath} also provides strong evidence for the following Induced Menger Conjecture, which corresponds to the $d=2$ of \cref{CoarseMenger}.

\begin{conj}\label{InducedMenger}
    For every $k\in \NN$, every graph $G$ and $X,Y\subseteq V(G)$ satisfies one of the following:
   \begin{enumerate}
        \item[(i)] $G$ contains $k$ pairwise anti-complete $(X,Y)$-paths; or
        \item[(ii)] there exists a set $Z\subseteq V(G)$ with $|Z|\leq k-1$ such that $G-N_G[Z]$ has no $(X,Y)$-path.
    \end{enumerate} 
\end{conj}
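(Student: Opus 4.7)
My plan is to attempt \cref{InducedMenger} by induction on $k$, with the base case $k=1$ being trivial (if $G$ has no $(X,Y)$-path then $Z=\emptyset$ works). The inductive step is where the substance lies, and the conjecture should be viewed as asserting that the hypergraph $\HH:=\{N_G[V(Q)]:Q\text{ is an }(X,Y)\text{-path}\}$ has integer matching-covering gap at most~$1$, a much sharper statement than what \cref{MainApath} gives for the $\A$-path hypergraph.

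As a starting point, I would reduce from $(X,Y)$-paths to $\A$-paths: attach a pendant $x'$ to each $x\in X$ and a pendant $y'$ to each $y\in Y$, let $\A$ be all these pendants, and use a group-labelling trick (labelling pendant edges on the $X$-side and $Y$-side oppositely) to enforce that the relevant $\A$-paths have one endpoint in each side. Applying \cref{MainApath}, suitably generalised to group-labelled graphs, in this modified setting yields $k$ pairwise anti-complete $(X,Y)$-paths or a hitting set of size $\mathcal{O}(k)$ with a large constant, yielding a qualitative version of the conjecture but far from the target $k-1$.

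To close the gap, I would turn to an extremal argument on a counterexample $G$ minimising $|V(G)|+|E(G)|$. Pick a shortest $(X,Y)$-path $P$ in $G$ and apply induction to $G-N_G[V(P)]$ with $X\setminus N_G[V(P)]$ and $Y\setminus N_G[V(P)]$ as terminals. Either the smaller graph contains $k-1$ pairwise anti-complete $(X,Y)$-paths, in which case adding $P$ gives $k$ such paths in $G$, or else it has a hitting set $Z'$ of size at most $k-2$. In the latter case, one would add a single vertex $v$ such that $N_G[v]$ hits every $(X,Y)$-path of $G$ that meets $N_G[V(P)]$ yet avoids $N_G[Z']$, thereby producing a hitting set of size $k-1$ in $G$ and a contradiction.

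The main obstacle is proving such a \emph{centering lemma}: that any family of $(X,Y)$-paths confined to the ball around a shortest $(X,Y)$-path can be pierced by a single ball in $G$. This is essentially a very local instance of \cref{InducedMenger} itself, so the induction is in danger of degenerating. I expect naive choices of $v$ (the midpoint of $P$, a cut vertex of an auxiliary graph, or a vertex of maximum ball-degree in $\HH$) to fail, and that resolving the obstacle requires either a delicate augmenting-path / exchange argument on the anti-complete path system analogous to matroid matching, or a polyhedral analysis establishing that $\HH$ has fractional matching-covering gap at most~$1$. Without a new tool of this flavour, even improving the constant of \cref{MainApath} from $78$ down to a small explicit value for the $(X,Y)$-path variant appears beyond the techniques developed in this paper.
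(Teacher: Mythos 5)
This statement is presented in the paper as an open conjecture (\cref{InducedMenger}); the paper offers no proof of it, only \cref{MainApath} as supporting evidence, so the only question is whether your proposal actually closes it. It does not, and you say as much yourself: everything hinges on the ``centering lemma'', namely that all $(X,Y)$-paths of $G-N_G[Z']$ that meet $N_G[V(P)]$ can be pierced by a single closed neighbourhood $N_G[v]$. There is no reason for this to hold: if $P$ is long, vertex-disjoint detours can attach to far-apart portions of $N_G[V(P)]$, and unless any two such detours are forced to be close to one another (which is precisely what one would have to prove, i.e.\ a local instance of the conjecture itself), no single ball hits them all. The greedy ``remove a shortest path, induct, then pay for the removed path'' scheme is exactly the argument pattern that yields Gallai's bound $2k-2$ rather than $k-1$, and in this paper it yields $4(k-1)$ balls of radius $4$ in \cref{MainApathLargeBalls}; by its nature it loses at least a factor of $2$ and cannot reach the Menger-tight bound $k-1$ without a genuinely new exchange or augmentation argument, which you correctly flag as missing.

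A secondary gap: your reduction from $(X,Y)$-paths to $\A$-paths invokes a group-labelled generalisation of \cref{MainApath} that is not proved anywhere in the paper --- the frame construction here is for unlabelled $\A$-paths only, and it is not checked that geodesic attachment and the hub-tree extraction respect a group labelling. So even the qualitative $\mathcal{O}(k)$-ball statement for $(X,Y)$-paths does not follow from the results of this paper without further work. In short, your proposal is a reasonable research plan that correctly locates the difficulty, but it is not a proof, and the statement must still be regarded as open.
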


If true, \cref{InducedMenger} would have immediate application to the study of induced minors. In particular, it would imply an Induced Grid Minor Theorem for $K_{1,t}$-induced-subgraph-free graphs that was conjectured by \citet{DKKMSW2024TI4}; see \cite{HJ2025InducedForest} for further discussion on this connection.

Returning to Geelen's conjecture, we show that the general case can be reduced to the $d=3$ setting. Our proof directly parallels a similar reduction by Seymour and McCarty; see~\cite[Theorem~4]{Albrechtsen2024Menger}.

\begin{obs}
    If \cref{GeelenConj} holds for $d=3$, then it holds for all $d\in \NN$ with $g(d):=d  \cdot g(3)$.
\end{obs}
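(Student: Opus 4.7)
The plan is to reduce to the case $d=3$ by replacing $G$ with its $d$-th power. I would define $G'$ on vertex set $V(G)$ with $uv \in E(G')$ iff $1 \leq \dist_G(u,v) \leq d$; a standard calculation then gives $\dist_{G'}(u,v) = \lceil \dist_G(u,v)/d \rceil$ for all $u,v \in V(G)$, so in particular $\dist_{G'}(u,v) \geq 3 \iff \dist_G(u,v) \geq 2d+1$, and $\dist_{G'}(v,Z) > g(3) \iff \dist_G(v,Z) > d \cdot g(3)$. I would then apply the $d=3$ case of \cref{GeelenConj} to $G'$, $\A$, and $k$, and show that whichever outcome results yields the conjectured conclusion for $G$ at distance scale $d$.

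The hitting-set outcome is essentially immediate: one obtains $Z \subseteq V(G')=V(G)$ with $|Z|\leq f(k)$ such that $G'-N_{G'}[Z,g(3)]$ has no $\A$-path. Since $E(G) \subseteq E(G')$, every $\A$-path of $G$ is also an $\A$-path of $G'$; and by the equivalence above, $N_{G'}[Z,g(3)] = N_G[Z, d\cdot g(3)]$. Together these imply that $G - N_G[Z,d\cdot g(3)]$ has no $\A$-path, which is the desired conclusion with $g(d) := d\cdot g(3)$.

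In the other outcome one obtains $\A$-paths $Q_1,\dots,Q_k$ in $G'$ at pairwise $G'$-distance at least $3$, and the task is to produce $\A$-paths in $G$ at pairwise $G$-distance $\geq d$. My approach is to straighten each $Q_i$ by replacing each of its $G'$-edges $uv$ by a shortest $uv$-path in $G$ (of length at most $d$), concatenating these into a $G$-walk $W_i$, and extracting a $G$-$\A$-path $P_i \subseteq W_i$ between the endpoints of $Q_i$. The crux is then the distance estimate: every vertex of $W_i$ lies within $G$-distance $\lfloor d/2\rfloor$ of $V(Q_i)$, while $\dist_G(V(Q_i), V(Q_j)) \geq 2d+1$ by the equivalence above, so the triangle inequality gives
\[
    \dist_G(V(P_i), V(P_j)) \;\geq\; \dist_G(V(W_i),V(W_j)) \;\geq\; (2d+1) - 2\lfloor d/2 \rfloor \;\geq\; d+1 \;>\; d,
\]
as required. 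This final computation is the main (and really only) nontrivial step; the rest of the reduction is purely mechanical.
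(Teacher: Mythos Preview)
Your proof is correct and follows essentially the same route as the paper: pass to the $d$-th power of $G$, apply the $d=3$ case there, and translate both outcomes back to $G$ via the distance relation $\dist_{G'}(u,v)=\lceil \dist_G(u,v)/d\rceil$. Your treatment of the path outcome (bounding each vertex of $W_i$ within $\lfloor d/2\rfloor$ of $V(Q_i)$ and applying the triangle inequality) is in fact slightly more careful with the floor/ceiling arithmetic than the paper's version, which just writes $\tfrac{d}{2}+d+\tfrac{d}{2}\leq 2d$.
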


\begin{proof}
   Let $H$ be the graph obtained from $G$ by adding an edge between every pair of distinct vertices at distance at most $d$ in $G$. If there exists a set $Z\subseteq V(H)$ with $|Z|\leq f(k)$ such that $H-N_H[Z,g(3)]$ has no $\A$-path, then $G-N_G[Z,d\cdot g(3)]$ also has no $\A$-path. Otherwise, $H$ contains $k$ $\A$-paths $P_1',\dots,P_k'$ such that $\dist_H(P_i',P_j')\geq 3$ for all distinct $i,j\in [k]$. Suppose $P_i'=(v_0',v_1',v_2',\dots,v_m')$. For each $j\in [m-1]$, there is a $(v_j',v_{j-1}')$-path $Q_j$ in $G$ of length at most $d$. Let $P_i$ be a $(v_0',v_m')$-path contained in $G[\bigcup(Q_j\colon j\in [m-1])]$. Thus $P_1,\dots,P_k$ is a collection of $\A$-paths. For contradiction, suppose $\dist_G(P_i,P_j)<d$ for some $i\neq j$. Then there exists a $(P_i',P_j')$-path in $G$ of length at most $\frac{d}{2}+d+\frac{d}{2}\leq 2d$, implying $\dist_H(P_i',P_j')\leq 2$, a contradiction.
\end{proof}

\textbf{Note:} Albrechtsen, Knappe, and Wollan~\cite{WollanCoarseGraph2024} independently announced a solution to \cref{GeelenConj} for $d=2$, where the radius of the balls depends on $k$ and the number of balls is exponential in $k$. In contrast, our results achieve a radius independent of $k$ while requiring only a linear number of balls.

\section{Notation}

See the textbook by \citet{diestel2017graphtheory} for undefined terms and notations. Let $\NN:=\{1,2,\dots\}$. Let $G$ be a graph and $X,Y\subseteq V(G)$. We write $G[X]$ for the subgraph of $G$ induced on $X$. We refer to $X$ and $G[X]$ interchangeably, whenever there is no chance of confusion. We write $G-X$ for the graph $G[V(G)\setminus X]$. An \defn{$(X, Y)$-path} is a path $P=v_1,\ldots,v_k$ with $V(P)\cap A = \{v_1\}$ and $V(P)\cap B = \{v_k\}$.
The \defn{distance} between $X$ and $Y$, denoted \defn{$\dist_G(X, Y)$} is the length of a shortest $(X, Y)$-path in $G$. For a vertex $v\in V(G)$, let $N_G(v)$ be the set of vertices adjacent to $v$ and $N_G[v]=N_G(v)\cup \{v\}$.

\section{Proof}

Let $G$ be a graph and $\A\subseteq V(G)$. Our proof of \cref{LongInducedAPaths} employs the well-known frame technique by \citet{Simonovits1967circuits}. An $\A$-frame in $G$ is roughly an induced subcubic tree whose leaves correspond to vertices in $\A$. We construct our $\A$-frame iteratively by taking geodesics from unprocessed $\A$-vertices to the frame while protecting leaves and degree-3 vertices. If we can construct an $\A$-frame with many leaves, we extract from it our collection of anti-complete $\A$-paths. If not, then we can separate the $\A$-frame from the remainder of the graph using a small number of balls. By applying induction on the number of $\A$-paths in the remainder of the graph, we obtain one of our desired outcomes. Note that \citet{AGHK2025EP} used a similar technique to prove an induced Erd{\H{o}}s-P{\'o}sa theorem.

The following lemma is used for extracting $\A$-paths from the frame.

\begin{lem}[\cite{BHJ2018Frames}]\label{LeafPaths}
    For every $p\in \NN$, every subcubic tree with $p$ leaves contain $\floor{\frac{p}{2}}$ pairwise vertex-disjoint leaf-to-leaf paths.
\end{lem}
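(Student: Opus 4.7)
The plan is to prove the lemma by induction on the number of leaves $p$, after first reducing to the cleaner case in which every internal vertex of $T$ has degree exactly~$3$. For this reduction I would pass to the tree $T'$ obtained from $T$ by suppressing every degree-$2$ vertex (replacing each maximal path of degree-$2$ vertices by a single edge). Then $T'$ is subcubic, has the same leaf set as $T$, and every internal vertex has degree exactly $3$. Since each edge of $T'$ corresponds to a $T$-path whose internal vertices appear on no other $T'$-edge, any collection of pairwise vertex-disjoint leaf-to-leaf paths in $T'$ lifts naturally to pairwise vertex-disjoint leaf-to-leaf paths in $T$. So it suffices to prove the lemma for $T'$.

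The case $p \leq 2$ is immediate: the tree is a single vertex, a single edge, or (after the reduction) a single edge, which is itself a leaf-to-leaf path. For $p \geq 3$, the key step is to locate a \emph{cherry}: a vertex $v$ with two leaf neighbours $\ell$ and $\ell'$. I would find one by rooting $T'$ at an arbitrary leaf $r$ and taking a leaf $\ell \neq r$ at maximum distance from $r$. The parent $v$ of $\ell$ has degree $3$, hence two children, and by maximality of the depth of $\ell$ both children must be leaves.

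Given such a cherry, take $P := \ell v \ell'$ as the first leaf-to-leaf path. If $p = 3$ then $T' = K_{1,3}$ and $P$ alone already gives the required $\lfloor 3/2 \rfloor = 1$ path. Otherwise $p \geq 4$, and the third neighbour $u$ of $v$ is internal of degree $3$. Removing $\{\ell, v, \ell'\}$ from $T'$ leaves $u$ with degree $2$; I suppress $u$ to obtain a subcubic tree $T''$ in which every internal vertex has degree $3$ and whose leaves are exactly the leaves of $T'$ other than $\ell$ and $\ell'$, so $T''$ has $p-2$ leaves. Applying the induction hypothesis to $T''$ yields $\lfloor (p-2)/2 \rfloor$ pairwise vertex-disjoint leaf-to-leaf paths; lifting them through the suppression of $u$ gives pairwise vertex-disjoint leaf-to-leaf paths in $T' - \{\ell, v, \ell'\}$, each disjoint from $P$. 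Together with $P$ this produces $1 + \lfloor (p-2)/2 \rfloor = \lfloor p/2 \rfloor$ leaf-to-leaf paths, completing the induction.

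The main subtlety this setup is designed to avoid is the appearance of ``spurious'' leaves in the residual tree---vertices that are leaves only after deleting $P$---which would spoil the leaf count used by the induction. Reducing once to the case where all internal vertices have degree $3$, and then suppressing the single degree-$2$ vertex $u$ that is created when a cherry is removed, is precisely what keeps the leaf counts honest throughout.
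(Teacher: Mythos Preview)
The paper does not supply its own proof of this lemma; it is simply quoted from \cite{BHJ2018Frames} and used as a black box. So there is nothing in the paper to compare your argument against.

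That said, your argument is correct and is essentially the standard one. The reduction by suppressing degree-$2$ vertices is sound and preserves the leaf set, and your lifting observation (that vertex-disjoint leaf-to-leaf paths in the suppressed tree pull back to vertex-disjoint leaf-to-leaf paths in the original) is valid. The cherry-finding step is justified exactly as you say: after the reduction every internal vertex has degree $3$, so the parent $v$ of a deepest leaf has two children, and both must be leaves by maximality of depth. Your handling of the residual tree is also clean: since $p\geq 4$, the third neighbour $u$ of $v$ cannot be a leaf (else $T'=K_{1,3}$ and $p=3$), so deleting $\{\ell,v,\ell'\}$ creates exactly one degree-$2$ vertex, which you suppress to keep the ``all internal vertices have degree $3$'' invariant and the leaf count equal to $p-2$. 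The disjointness of the lifted paths from $P=\ell v\ell'$ is immediate. The only cosmetic quibble is that a single-vertex tree has zero leaves rather than one, but this does not affect the base case since $\lfloor p/2\rfloor=0$ there anyway.
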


Before applying \cref{LeafPaths}, we must overcome two obstacles. First, the leaf-to-leaf paths are not necessarily anti-complete and long. Second, the $\A$-frame may not induce a subcubic tree since non-tree edges may be present. To address these difficulties, we introduce the following structure. 

For $\ell\in \NN$, an \defn{$\ell$-hub-tree} is a tuple $(F,T,\A,X)$  satisfying the following properties:

\begin{enumerate}[label=(H\arabic*)]
    \item\label{H1} $F$ is a graph;
    \item\label{H2} $T\subseteq F$ is a subcubic tree with $V(T)=V(F)$;
    \item\label{H3} $\A$ is the set of degree-$1$ vertices of $T$ which is equal to the set of degree-$1$ vertices of $F$;
    \item\label{H4} $X$ is the set of degree-$3$ vertices in $T$;
    \item\label{H5} for all distinct $x,y\in \A$, we have $\dist_F(x,y)\geq \ell$;
    \item\label{H6} for all distinct $x,y\in X$, we have $\dist_F(x,y)\geq 3$; and
    \item\label{H7} for every $uv\in E(F)\setminus E(T)$, there exists $x\in X$ such that $\max\{\dist_T(u,x),\dist_T(v,x)\}\leq 2$.
\end{enumerate}

\begin{lem}\label{AnticompleteHubPaths}
    For all $p,\ell \in \NN$, if $(F,T,\A,X)$ is an $\ell$-hub-tree with $|\A|=p$, then $F$ contains $\floor{\frac{p}{2}}$ pairwise anti-complete induced $\A$-paths, each of length at least $\ell$.
\end{lem}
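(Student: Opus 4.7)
The plan is to argue by induction on $p = |\A|$. The base cases $p \in \{0, 1\}$ are vacuous. For $p = 2$, condition~\ref{H3} forces $X = \emptyset$, and then~\ref{H7} forces $E(F) = E(T)$, so $F$ is the unique path between the two leaves of $\A$; this path is induced in $F$ and has length at least $\ell$ by~\ref{H5}, as required.

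For $p \geq 3$, I would begin by locating a \emph{sibling pair}: two leaves $x, y \in \A$ whose tree-paths in $T$ to the nearest hub both terminate at a common hub $b \in X$. Such a pair always exists because the contracted binary tree formed from $T$ by suppressing degree-$2$ vertices has an internal vertex of maximum depth with at least two leaf children, which give the desired sibling pair. Let $P_T$ denote the tree-path from $x$ through $b$ to $y$, and define $P^*$ to be any induced $(x, y)$-path in $F[V(P_T)]$ -- obtained for instance by greedily shortcutting $P_T$ along any chords present in $F$. Then $V(P^*) \subseteq V(P_T)$, and since $P^*$ is induced in $F$ its length is at least $\dist_F(x, y) \geq \ell$ by~\ref{H5}.

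To complete the inductive step, I would set $W := V(P_T) \cup (B_T(b, 2) \setminus V(P_T))$ -- that is, $V(P_T)$ together with the (at most two) vertices on $b$'s third arm within tree-distance $2$ of $b$ -- and form $F^\circ := F - W$, $T^\circ := T - W$, $\A^\circ := \A \setminus \{x, y\}$, and $X^\circ := X \setminus \{b\}$. The argument then relies on two facts. First, conditions~\ref{H6} and~\ref{H7} combine to show that $b$ is the only hub within tree-distance $2$ of any vertex of $V(P_T)$, which forces every $F$-edge joining $V(P_T)$ to its complement to land in $W \setminus V(P_T)$; hence $N_F[V(P^*)] \subseteq W$, and every path contained in $F^\circ$ is automatically anti-complete to $P^*$ in $F$. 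Second, provided $(F^\circ, T^\circ, \A^\circ, X^\circ)$ is again an $\ell$-hub-tree (now with $|\A^\circ| = p - 2$), the induction hypothesis yields $\lfloor(p-2)/2\rfloor$ pairwise anti-complete induced $\A^\circ$-paths of length $\geq \ell$ in $F^\circ$, which together with $P^*$ form the required $\lfloor p/2 \rfloor$ paths in $F$.

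The principal obstacle is verifying that $(F^\circ, T^\circ, \A^\circ, X^\circ)$ really is an $\ell$-hub-tree. Conditions~\ref{H1}, \ref{H2}, \ref{H4}, and~\ref{H6} transfer almost immediately to the induced substructure, and~\ref{H5} holds since vertex-deletion can only increase distances. The delicate properties are~\ref{H3} (agreement between the degree-$1$ vertices of $T^\circ$ and $F^\circ$) and~\ref{H7} (existence of a witnessing hub for each surviving non-tree edge). Both can fail in corner cases, most notably when the pipe from $b$ to the next hub $b'$ along the third arm has length exactly $3$: then removing the two arm-vertices in $B_T(b,2)$ drops $b'$'s tree-degree to $2$, potentially leaving non-tree edges near $b'$ without a witnessing hub in $X^\circ$. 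I expect the bulk of the technical work to involve either adaptively enlarging $W$ (for example, by also deleting short pipes together with selected arms of their far endpoint), or choosing the sibling pair more carefully so that the third arm is ``long enough'' to preserve~\ref{H3} and~\ref{H7} in $(F^\circ, T^\circ, \A^\circ, X^\circ)$.
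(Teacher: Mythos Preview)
Your inductive strategy is genuinely different from the paper's, and the obstacle you flag at the end is a real gap, not a routine verification. The paper's proof is non-inductive and much shorter: it invokes \cref{LeafPaths} once on the tree $T$ to obtain $\lfloor p/2\rfloor$ pairwise vertex-disjoint leaf-to-leaf paths $P_1',\dots,P_k'$ \emph{simultaneously}, and then argues in a few lines that these paths are already pairwise anti-complete in $F$. The point is that if $uv\in E(F)$ joined $P_i'$ to $P_j'$, then by \ref{H3} neither $u$ nor $v$ can be an endpoint of its path, so each is an internal vertex of a $T$-path and hence has two $T$-neighbours on that path; together with the edge $uv$ this gives $F$-degree at least $3$. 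If $uv\in E(T)$ then both $u,v\in X$ at $T$-distance $1$, contradicting \ref{H6}. If $uv\notin E(T)$ then \ref{H7} supplies a hub $x$ within $T$-distance $2$ of both; assuming $x\notin V(P_i')$, the $T$-projection $x'$ of $x$ onto $P_i'$ is then a second hub with $\dist_T(x,x')\leq 2$, again contradicting \ref{H6}. Finally one shortcuts each $P_i'$ inside $F[V(P_i')]$ to make it induced, and \ref{H5} gives the length bound.

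The difficulty in your approach is structural, and neither of your proposed fixes clearly succeeds. After deleting $W$, the vertex at $T$-distance $3$ from $b$ on the third arm either (pipe length $>3$) becomes a new leaf of $T^\circ$ not lying in $\A$, so \ref{H3} fails outright; or (pipe length $=3$) is the next hub $b'$, whose $T^\circ$-degree drops to $2$, so $b'$ must be removed from $X^\circ$ and any non-tree edge near $b'$ is left without a witness, breaking \ref{H7}. Enlarging $W$ to swallow the whole pipe only transports the same defect to $b'$'s remaining arms and can cascade through the tree; and ``choosing the sibling pair more carefully'' cannot help in general, since in an $\ell$-hub-tree where every inter-hub pipe has length exactly $3$ (permitted by \ref{H6}) \emph{every} sibling pair is bad. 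A genuine rescue would require weakening the inductive invariant to tolerate spurious leaves or demoted hubs, which is considerably more work than the paper's direct argument via \cref{LeafPaths}.
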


\begin{proof}
    Let $k:=\floor{\frac{p}{2}}$. By \cref{LeafPaths}, the tree $T$ contains $k$ pairwise vertex-disjoint $\A$-paths $P_1',\dots,P_k'$. 
    
    Suppose for contradiction that there exists an edge $uv\in E(F)$ such that $u\in V(P_i')$, $v\in V(P_j')$, and $i\neq j$. Since $P_i'$ and $P_j'$ are vertex-disjoint paths with length at least $1$, both $u$ and $v$ have degree at least $2$ in $F$. Since $P_i'$ and $P_j'$ are $\A$-paths, \cref{H3} implies that neither $u$ nor $v$ is an endpoint of its respective path, so both $u$ and $v$ have degree at least $3$ in $F$. If $uv\in E(T)$, then by \cref{H4}, $u,v\in X$ which contradicts \cref{H6}. Thus $uv\in E(F)\setminus E(T)$. By \cref{H7}, there exists $x\in X$ such that $\max\{\dist_T(u,x),\dist_T(v,x)\}\leq 2$. Since $P_i'$ and $P_j'$ are vertex-disjoint, we may assume without loss of generality that $x\not\in V(P_i')$. Let $x'\in V(P_i')$ be the vertex in $P_i'$ that is closest to $x$ in $T$. Since $x'$ lies on $P_i'$ and has a neighbour closer to $x$, it has degree $3$ in $T$, again contradicting \cref{H6}. Hence, no such edge $uv$ exists. Thus the paths $P_1',\dots,P_k'$ are pairwise anti-complete in $F$. 
    
    For each $i\in [k]$, let $P_i$ be a vertex-minimal path in $F[V(P_i')]$ whose endpoints coincide with those of $P_i'$. Then $P_1,\dots,P_k$ are also pairwise anti-complete $\A$-paths in $F$. By minimality, each $P_i$ is induced in $F$. By \cref{H5}, each $P_i$ has length at least $\ell$. Therefore, $P_1,\dots,P_k$ is our desired collection of $\A$-paths.
\end{proof}

We are now ready to prove our main results. \cref{LongInducedAPaths} follows immediately from the next lemma.

\begin{lem}
     For all $k,\ell\in \NN$, every graph $G$ and $\A\subseteq V(G)$ contains one of the following:
    \begin{enumerate}
        \item[(i)] $k$ pairwise anti-complete induced $\A$-paths of length at least $\ell$; or
        \item[(ii)] sets $Z_1,Z_2\subseteq V(G)$ with $|Z_1|\leq (12\max\{\ell,3\}+42)(k-1)$ and $|Z_2|\leq 4(k-1)$ such that $G-(N_G[Z_1]\cap N_G[Z_2,\max\{\ell+1,4\}])$ contains no induced $\A$-path of length at least $\ell$.
    \end{enumerate} 
\end{lem}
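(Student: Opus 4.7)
The plan is to follow the frame-construction strategy announced after \cref{LeafPaths}: we iteratively build an $\ell$-hub-tree $(F,T,\A',X)$ inside $G$ by attaching induced shortest paths from unprocessed $\A$-vertices to the current frame while protecting its leaves and hubs. We initialise $F=T$ as a short seed (for instance, a shortest induced $\A$-path in $G$, with $X=\emptyset$), so that (H1)--(H7) hold trivially. At each subsequent step, we search for $a\in \A\setminus V(F)$ and an induced $(a,V(F))$-path $P$ in $G$ such that $F':=G[V(F)\cup V(P)]$ together with $T':=T\cup P$ remains an $\ell$-hub-tree. Three compatibility constraints drive the search: the new leaf $a$ must lie at $F'$-distance at least $\ell$ from every surviving leaf (preserving (H5)); if the $F$-endpoint of $P$ acquires degree $3$ in $T'$, it must lie at $F'$-distance at least $3$ from each existing hub (preserving (H6)); and every new non-tree edge of $F'$ must lie within $T'$-distance $2$ of some hub of $X'$ (preserving (H7)). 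We halt when no legal $(a,P)$ exists.

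At termination, either $|\A'|\geq 2k$, in which case \cref{AnticompleteHubPaths} extracts $k$ pairwise anti-complete induced $\A$-paths of length at least $\ell$, yielding outcome (i); or $|\A'|\leq 2k-1$ and, since a subcubic tree with $p$ leaves has exactly $p-2$ degree-$3$ vertices, $|X|\leq 2k-3$. Setting $Z_2:=\A'\cup X$ then gives $|Z_2|\leq 4(k-1)$, and the key claim is that $G-N_G[Z_2,\max\{\ell+1,4\}]$ contains no induced $\A$-path of length at least $\ell$: any residual such path $P^*$ would, via a suitable subpath, supply a legal extension of $F$, contradicting maximality of the frame.

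For the smaller set $Z_1$, we augment $Z_2$ by a radius-$1$ cover of each of the at most $2|\A'|-3\leq 4k-5$ branches of $T$. By forcing the construction always to attach $P$ at a closest point of $V(F)$ (and pruning redundant portions), one arranges that each branch has length $O(\max\{\ell,3\})$, so each branch requires $O(\max\{\ell,3\})$ radius-$1$ balls; the constants then work out to $|Z_1|\leq (12\max\{\ell,3\}+42)(k-1)$. The principal difficulty is preserving (H7) during the extension step: even when $P$ is chosen as an induced shortest path avoiding the neighbourhoods of existing leaves and hubs, the induced subgraph $G[V(F)\cup V(P)]$ may acquire surprise chords between $V(P)$ and far-off frame vertices (or between non-consecutive vertices of $P$), and such chords need not lie near a hub. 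The attachment rule must accordingly forbid extensions whose chords cannot be justified, and verifying the coverage property for $Z_2$—that maximality of the frame always supplies such an extension when a residual $\A$-path of length at least $\ell$ exists outside $N_G[Z_2,\max\{\ell+1,4\}]$—requires a careful case analysis driven by the interaction of the residual path with the frame's branches.
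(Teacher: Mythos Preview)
Your overall strategy---grow a maximal $\ell$-hub-tree and read off $Z_2=\A'\cup X$---matches the paper in spirit, but there is a genuine gap: maximality of the frame does \emph{not} imply that $G-N_G[Z_2,\max\{\ell+1,4\}]$ is free of long induced $\A$-paths. The simplest obstruction is a disconnected $G$ whose seed path lies in one component while a second component contains many long induced $\A$-paths; no $(a,V(F))$-path from the second component exists at all, so the frame is maximal with $|\A'|=2$, yet removing $N_G[Z_2,\hat\ell+1]$ leaves the second component untouched. The same phenomenon occurs in connected graphs whenever every $(\overline{\A},V(F))$-path is forced through the protected zone near the leaves and hubs of $F$: then no legal extension exists, but plenty of $\overline{\A}$-paths survive far from $Z_2$. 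The paper repairs this by induction on $k$: the real content of the maximality argument is that the set $\widetilde Y:=N_G[Y]\setminus V(F)\subseteq N_G[\A_F\cup X,\hat\ell+1]$ separates $\overline{\A}$ from $F$; one then applies the lemma with $k':=k-\lfloor p/2\rfloor$ to the components of $G-\widetilde Y$ meeting $\overline{\A}$ and takes $Z_2:=Z_2'\cup\A_F\cup X$.

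Your construction of $Z_1$ also breaks: the claim that each branch of $T$ has length $O(\hat\ell)$ is false. Already the seed---a shortest induced $\A$-path of length at least $\ell$---can be arbitrarily long (take $G$ a long path with $\A$ its two endpoints), and attaching shortest paths does nothing to shorten existing branches. The paper never attempts to cover the branches. Instead it sets $Z_1:=Z_1'\cup Y$ where $Y:=\{v\in V(F):\dist_F(v,\A_F\cup X)\leq\hat\ell\}$, a set of size at most $(4\hat\ell+14)p$ that may omit most of $V(F)$; what makes this suffice is once again the separator property $\widetilde Y\subseteq N_G[Y]$ combined with the inductive $Z_1'$ on the residual graph.
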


\begin{proof}
    Let $\widehat{\ell}:=\max\{\ell,3\}$. We proceed by induction on $k\geq 1$. If $G$ has no induced $\A$-path of length at least $\ell$, then the second outcome occurs for all $k$ by setting $Z_1:=Z_2:=\emptyset$. Thus we may assume that $G$ contains an induced $\A$-path of length at least $\ell$. In which case, for $k=1$, the first outcome occurs. 

    Now assume $k>1$. Before constructing our $\A$-frame, we first handle induced $\A$-paths of intermediate length. Suppose $G$ contains an induced $\A$-path $P$ of length between $\ell$ and $2\ell-1$. Let $G':=G-N_G[P]$. If $G'$ contains $k-1$ pairwise anti-complete induced $\A$-paths $P_1,\dots,P_{k-1}$ of length at least $\ell$, then $P,P_1,\dots,P_{k-1}$ is our desired collection of $\A$-paths. Otherwise, by induction, there exist sets $Z_1',Z_2'\subseteq V(G')$ with $|Z_1'|\leq (12\widehat{\ell}+42)(k-2)$ and $|Z_2'|\leq 4(k-2)$, such that $G'-(N_{G'}[Z_1']\cap N_{G'}[Z_2',\widehat{\ell}+1])$ has no induced $\A$-path of length at least $\ell$. Let $x,y\in V(P)$ be the endpoints of $P$. Setting $Z_1:=Z_1'\cup V(P)$ and $Z_2:=Z_2'\cup \{x,y\}$, we have $|Z_1|\leq (12\widehat{\ell}+42)(k-2)+2\ell<(12\widehat{\ell}+42)(k-1)$, $|Z_2|\leq 4(k-2)+2<4(k-1)$ and $G- (N_{G}[Z_1]\cap N_{G}[Z_2,\widehat{\ell}+1])$ has no induced $\A$-path of length at least $\ell$ (since $N_G[P]\subseteq N_{G}[Z_1]\cap N_{G}[Z_2,\widehat{\ell}+1]$), as required. 
    
    Hence, we may assume that every induced $\A$-path in $G$ of length at least $\ell$ has length at least $2\ell$.

    We define an \defn{$\A$-frame} in $G$ to be a tuple $(F,T,\A_F,X,Y,\widetilde{Y},\overline{\A})$ satisfying:
    
    \begin{enumerate}[label=(A\arabic*)]
        \item\label{A1} $F$ is an induced subgraph of $G$;
        \item\label{A2} $T\subseteq F$ is a subcubic tree with $V(T)=V(F)$;
        \item\label{A3} $\A_F:=\A\cap V(F)$ is the set of degree-$1$ vertices of $T$, which equals the set of degree-$1$ vertices of $F$;
        \item\label{A4} $X$ is the set of degree-$3$ vertices of $T$;
        \item\label{A5} $Y:= \{v\in V(F)\colon \dist_F(v,X\cup \A_F)\leq \widehat{\ell}\}$;
        \item\label{A6} $\widetilde{Y}:=N_G[Y]\setminus V(F)$;
        \item\label{A7} $\overline{\A}:=\A\setminus \A_F$;
        \item\label{A8}  for every $uv\in E(F)\setminus E(T)$, there exists $x\in X$ such that $\max\{\dist_T(u,x),\dist_T(v,x)\}\leq 2$;
        \item\label{A9} for every vertex $v\in V(G)\setminus (V(F)\cup \widetilde{Y})$, if $x,y\in N_G(v)\cap V(F)$ then $\dist_T(x,y)\leq 2$;
        \item\label{A10} $\dist_F(x,y)\geq \ell$ for all distinct $x,y\in \A_F$; and
        \item\label{A11} $\dist_F(x,y)\geq 3$ for all distinct $x,y\in X$.
    \end{enumerate}
    
    We now show that $G$ contains a non-trivial $\A$-frame. 
    
    \begin{claim}\label{ClaimAFrame2}
        $G$ contains an $\A$-frame with $|\A_F|=2$.
    \end{claim}

    \begin{proof}
        Let $F=T$ be a shortest induced $\A$-path in $G$ of length at least $\ell$. By our assumptions, such a path exists and has length at least $2\ell$. Let $\A_F$ be its two endpoints, $X:=\emptyset$, $Y:= \{v\in V(F)\colon \dist_F(v,\A_F)\leq \widehat{\ell}\}$, $\widetilde{Y}:=N_G[Y]\setminus V(F)$, and $\overline{\A}:=\A\setminus \A_F$.

        We claim that $(F,T,\A_F,X,Y,\widetilde{Y},\overline{\A})$ is an $\A$-frame. By construction, \ref{A1}, \ref{A2}, \ref{A4} to \ref{A8},\ref{A10} and \ref{A11} are immediate. 
        
        For \cref{A3}, we clearly have $\A_F\subseteq \A\cap V(F)$ with $\A_F$ also being the set of degree-$1$ vertices of both $T$ and $F$. For contradiction, suppose there exists $v\in (\A\cap V(F))\setminus \A_F$. Let $F'$ be a longest subpath of $F$ which has $v$ as an endpoint. Since $F$ has length at least $2\ell$ and $v$ is not an endpoint of $F$, it follows that $F'$ is a strictly shorter induced $\A$-path in $G$ of length at least $\ell$, contradicting our choice of $F$. Thus $\A_F=\A\cap V(F)$.

        For \cref{A9}, suppose there exists $v\in V(G)\setminus (V(F)\cup \widetilde{Y})$ and $x,y\in N_G(v)\cap V(F)$ with $\dist_T(x,y)>2$. Choose such $x,y\in N_G
        (v)\cap V(F)$ to maximise $\dist_T(x,y)$. Let $F'$ be the path obtained from $F$ by replacing the $(x,y)$-subpath of $F$ with the path $xvy$. Then $F'$ is an induced $\A$-path in $G$ whose length is strictly shorter than $F$. Moreover, since $v\not \in \widetilde{Y}$, $F'$ contains the first $\ell+1$ vertices of $F$ and thus has length at least $\ell$, again contradicting our choice of $F$. Thus \cref{A9} holds.
    \end{proof}

    Fix an $\A$-frame $(F,T,\A_F,X,Y,\widetilde{Y},\overline{\A})$ in $G$ with $p:=|\A_F|$ maximum. Observe that $(F,T,\A_F,X)$ is an $\ell$-hub tree in $G$. By \cref{AnticompleteHubPaths}, we obtain the following.

    \begin{claim}\label{AFramePaths}
        $F$ contains $\floor{\frac{p}{2}}$ pairwise anti-complete induced $\A_F$-paths of length at least $\ell$.
    \end{claim}

    Since $F$ is an induced subgraph of $G$, we are done if $\floor{\frac{p}{2}}\geq k$. Thus we may assume that $\floor{\frac{p}{2}}< k$. 

    \begin{claim}\label{SizeX}
         $|X|= p-2$. 
    \end{claim}
  
    \begin{proof}
        Since $T$ is a subcubic tree with $p$ leaves and $X$ is the set of degree-$3$ vertices of $T$, we have $|X|=p-2$. 
    \end{proof}
    
    \begin{claim}\label{SizeY}
         $|Y|\leq (4\widehat{\ell}+14)p$. 
    \end{claim}
  
    \begin{proof}
        By \cref{A11}, $X$ is an independent set in $T$. Observe that $T-X$ is a linear forest whose vertices of degree $1$ or $0$ are precisely the vertices in $\A_f\cup N_T(X)$. Using \ref{A8}, we bound the size of $Y$ by considering the expansion within the linear forest and the expansion around $X$:
        \begin{align*}
            |Y|&\leq |N_{F-X}[\A_F,\widehat{\ell}]|+|N_F[X,\widehat{\ell}]|\\
            &\leq |N_{T-X}[\A_F,\widehat{\ell}+3]|+|N_T[X,\widehat{\ell}+3]|\\
            &\leq (\widehat{\ell}+4)|\A_F|+(3(\widehat{\ell}+3)+1)|X|\\
            &= (\widehat{\ell}+4)p+(3\widehat{\ell}+10)(p-2) \\
            &< (4\widehat{\ell}+14)p. \qedhere
        \end{align*}
        
    \end{proof}

    Observe that \ref{A5} and \ref{A6} imply the following.
    
    \begin{claim}\label{Ytilde}
         $\widetilde{Y}\subseteq N_G[Y]\cap N_G[\A\cup X,\widehat{\ell}+1]$. 
    \end{claim}
      
    We now show that $\widetilde{Y}$ separates $F$ from $\overline{\A}$.
    
    \begin{claim}\label{ClaimAntiG}
        $G - \widetilde{Y}$ contains no $(\overline{\A},F)$-path.
    \end{claim}
  
    \begin{proof}
        Suppose for contradiction that $G-\widetilde{Y}$ contains an $(\overline{\A},F)$-path. Let $P=(v_0,v_1,\dots, \allowbreak v_{m-1},v_m)$ be a shortest such path, where $v_0\in \overline{\A}$ and $v_m\in V(F)$. Our goal is to append $P$ to the $\A$-frame to construct an $\A$-frame with one additional leaf, contradicting the maximality of $p$. 
        
        By minimality of $P$ and \ref{A6}, the path $P$ has the following properties:
        
        \begin{enumerate}[label=(P\arabic*)]
            \item\label{P1} $\overline{\A}\cap V(P)=\{v_0\}$;
            \item\label{P5} $V(F)\cap V(P)=\{v_m\}$;
            \item\label{P2} $P$ is an induced path;
            \item\label{P3} $P-\{v_{m-1},v_m\}$ is anti-complete to $F$;
            \item\label{P4} $V(P)\cap (Y\cup \widetilde{Y})=\emptyset$;
            \item\label{P6} for every vertex $v\in V(G)\setminus (V(F)\cup \widetilde{Y}\cup V(P))$; if $x,y\in N_G(v)\cap V(P)$, then $\dist_P(x,y)\leq 2$; and
            \item\label{P7} no vertex in $ V(G)\setminus (V(F)\cup \widetilde{Y}\cup V(P))$ has a neighbour in $V(F)$ and a neighbour in $V(P)\setminus\{v_{m-2},v_{m-1},v_m\}$.
        \end{enumerate}

        We now define our extended $\A$-frame. Let $F'$ be the subgraph of $G$ induced by $V(F)\cup V(P)$; $T':=T\cup P$; $\A_F':=\A_F \cup \{v_0\}$; $X':= X \cup \{v_m\}$; $Y':=\{v\in V(T')\colon \dist_{F'}(v,\A_F'\cup X')\leq \widehat{\ell}\}$; 
        $\widetilde{Y}':=N_G[Y']\setminus V(F')$; and $\overline{\A}':=\A\setminus \A_F'$. 
        
        We claim that $(F',T',\A_F',X',Y',\widetilde{Y}',\overline{\A}')$ is an $\A$-frame in $G$. \ref{A1}, \ref{A5}, \ref{A6} and \ref{A7} follow immediately from the definitions. We now verify the remaining properties.
        
        \begin{itemize}
            \item[{\ref{A2}}] By construction, $V(T')=V(F')$. Since $T$ and $P$ are trees, \ref{P5} implies that $T'$ is also a tree. \ref{A6} from the original frame with \ref{P4} implies that $v_m\not \in X\cup \A_F$, and so $\deg_T(v_m)=2$. Since $v_m$ has degree $1$ in $P$, it follows that $\deg_{T'}(v_m)=3$. Thus $T'$ is also subcubic since all other vertices in $T'$ have degree at most $3$ since both $T$ and $P$ are subcubic. 
            
            \item[{\ref{A3}}] By \ref{A3} from the original frame and \ref{P1}, we have $\mathcal{A}_F'=\mathcal{A}_F\cup\{v_0\}=(\mathcal{A}\cap V(F))\cup\{v_0\}=\mathcal{A}\cap V(F')$. Since $v_0$ is the only vertex in $V(P)$ that has degree $1$ in $T'$ or $F'$, it follows that $\A_F'$ is the set of degree-$1$ vertices in both $T'$ and in $F'$.
            
            \item[{\ref{A4}}] By \ref{A4} of the original frame, the degree-$3$ vertices of $T$ are precisely $X$. As shown above, $v_m$ is the only vertex in $V(P)$ that has degree $3$ in $T'$. Therefore $X'$ is the set of degree-$3$ vertices of $T'$.
            
            \item[{\ref{A8}}] Let $uv\in E(F')\setminus E(T')$. If $uv\in E(F)\setminus E(T)$, then \cref{A8} from the original frame provides  $x\in X'$ such that $\max\{\dist_{T'}(u,x),\dist_{T'}(v,x)\}\leq 2$. If $uv\in E(P)\setminus E(T')$, then it contradicts \ref{P2}. Otherwise, $\{u,v\}\cap V(P)\neq\emptyset$ and $\{u,v\}\cap V(F)\neq \emptyset$. By \cref{P3}, $\{u,v\}\cap \{v_{m-1},v_m\}\neq \emptyset$. Since $V(P)\cap \widetilde{Y}=\emptyset$, \cref{A9} applied to the original frame with $v_{m-1}$ being the external vertex gives $\max\{\dist_{T'}(u,v_m),\dist_{T'}(v,v_m)\}\leq 2$. Since $v_m\in X'$, \cref{A8} holds.
           
            \item[{\ref{A9}}] Let $v\in V(G)\setminus (V(F')\cup \widetilde{Y}')$ with $x,y\in N_G(v)\cap V(F')$. If $x,y\in V(F)$, then \cref{A9} from the original frame implies $\dist_{T'}(x,y)\leq 2$. If $x,y\in V(P)$, then \cref{P6} gives $\dist_{T'}(x,y)\leq 2$. So we may assume that $x\in V(P)$ and $y\in V(F)$. By \cref{P7}, $x\in \{v_{m-2},v_{m-1},v_m\}$. In which case, $x\in Y'$ and so $v\in \widetilde{Y}'$, contradicting our choice of $v$. 
        
            \item[{\ref{A10}}] Let $x,y\in \A_F'$ be distinct. If $x,y\in \A_F$, then \cref{A10} from the original frame implies $\dist_{T'}(x,y)\geq \ell$. Otherwise, we may assume that $x\in \A_F$ and $y=\{v_0\}$. By \ref{A5} from the original frame and \ref{P4}, every vertex in $T'$ at distance at most $\ell$ from $x$ is contained in $Y\subseteq V(T)$. Since $v_0\not \in V(T)$, it follows that $\dist_{T'}(x,y)> \ell$.
            
            \item[{\ref{A11}}] Let $x,y\in X'$ be distinct. If $x,y\in X$, then \cref{A10} from the original frame implies $\dist_{T'}(x,y)\geq 3$. Otherwise, we may assume that $x\in X$ and $y=\{v_m\}$. By \ref{A5} from the original frame and \ref{P4}, every vertex in $T'$ at distance at most $3$ from $x$ is contained in $Y$. By \ref{P4} again, we have $\dist_{T'}(x,y)\geq \ell$.
        \end{itemize}

        Therefore $(F',T',\A_F',X',Y',\widetilde{Y}',\overline{\A}')$ is an $\A$-frame in $G$ with $|\A_F'|=|\A_F|+1>|\A_F|$, a contradiction.
    \end{proof}

    Let $G'$ be the union of all connected components of $G-\widetilde{Y}$ containing vertices from $\overline{\A}$. Set $k':=k-\floor{\frac{p}{2}}$.

    \begin{claim}
         If $G'$ contains $k'$ pairwise anti-complete induced $\overline{\A}$-path of length at least $\ell$, then $G$ contains $k$ pairwise anti-complete induced $\A$-paths of length at least $\ell$.
    \end{claim}

    \begin{proof}
        By \cref{ClaimAntiG}, no connected component of $G-\widetilde{Y}$ contains a vertex from both $V(F)$ and $\overline{\A}$. Since $\widetilde{Y}\cap V(F)=\emptyset$ (by \ref{A6}), $G'$ is anti-complete to $F$. The claim then follows from \cref{AFramePaths}.
    \end{proof}
    
     Therefore, we may assume that $G'$ does not contain $k'$ pairwise anti-complete induced $\overline{\A}$-path of length at least $\ell$. The next claim completes the proof.

    \begin{claim}
        There exist sets $Z_1,Z_2\subseteq V(G)$ with $|Z_1|\leq (12\widehat{\ell}+42)(k-1)$ and $|Z_2|\leq 4(k-1)$ such that $G-(N_G[Z_1]\cap N_G[Z_2,\widehat{\ell}+1])$ has no induced $\A$-path of length at least $\ell$.
    \end{claim}

    \begin{proof}
        By \cref{ClaimAFrame2} and our choice of $\A$-frame, we have $p\geq 2$, so $k'<k$. By induction, there exist $Z_1',Z_2'\subseteq V(G')$ with $|Z_1'|\leq (12\widehat{\ell}+42)(k'-1)$ and $|Z_2'|\leq 4(k'-1)$ such that $G'-(N_{G'}[Z_1']\cap N_{G'}[Z_2,\widehat{\ell}+1])$ has no induced $\overline{\A}$-path of length at least $\ell$. Set $Z_1:=Z_1'\cup Y$ and $Z_2:=Z_2'\cup \A_F\cup X$. Then, using \Cref{SizeY} we obtain 
        $$|Z_1|\leq (12\widehat{\ell}+42)(k-\floor{\frac{p}{2}}-1)+(4\widehat{\ell}+14)p\leq (12\widehat{\ell}+42)(k-1),$$ 
        $$|Z_2|\leq 4(k-\floor{\frac{p}{2}}-1)+(2p-2)\leq 4(k-1),$$ 
        and $\A_F \subseteq Z_1\cap Z_2$. Thus any $\A$-path in $G-(N_G[Z_1]\cap N_G[Z_2,\widehat{\ell}+1])$ is an $\overline{\A}$-path. Since $G'-(N_{G'}[Z_1']\cap N_{G'}[Z_2',\widehat{\ell}+1])$ has no induced $\overline{\A}$-path of length at least $\ell$, every induced $\overline{\A}$-path in $G-(N_G[Z_1]\cap N_G[Z_2,\widehat{\ell}+1])$ of length at least $\ell$ must contain a vertex from $\widetilde{Y}$. By \cref{Ytilde}, we have $\widetilde{Y}\subseteq N_G[Y]\cap N_G[\A\cup X,\widehat{\ell}+1]\subseteq N_G[Z_1]\cap N_G[Z_2,\widehat{\ell}+1]$, completing the proof.
    \end{proof}
    
\end{proof}

\subsection*{Acknowledgements}
While this work was carried out during the second half of 2025, it was motivated by Jim Geelen's conjecture from the 2024 Barbados Graph Theory Workshop held at Bellairs Research Institute in March 2024. We thank him and all the workshop participants for stimulating discussions about coarse graph theory. We also thank Piotr Micek for helpful comments on an earlier version of this paper.

{
\fontsize{11pt}{12pt}
\selectfont
	
\hypersetup{linkcolor={red!70!black}}
\setlength{\parskip}{2pt plus 0.3ex minus 0.3ex}

\bibliographystyle{DavidNatbibStyle}
\bibliography{main.bbl}
}

\end{document}